\newtheorem{thm}{Theorem}
\newtheorem{lem}[thm]{Lemma}
\def\R{\mathbb R}
\def\N{\mathbb N}
\def\L{\mathcal L }
\newcommand{\vertiii}[1]{{\left\vert\kern-0.25ex\left\vert\kern-0.25ex\left\vert #1
		\right\vert\kern-0.25ex\right\vert\kern-0.25ex\right\vert}}
\newcommand{\norm}[2]{{\left\| #1 \right\|}_{#2}}
\newcommand{\ltwo}[1]{\norm{#1}{L^2}}
\newcommand{\linf}[1]{\norm{#1}{L^\infty}}
\begin{document}
\title{Wick-type stochastic parabolic equations with  random potentials}

\author[S. Gordi\'c, T. Levajkovi\'c, Lj. Oparnica]{Sne\v zana Gordi\'c$^1$, Tijana Levajkovi\'c$^2$, \\ Ljubica Oparnica$^3$}
\date{March 30, 2022}

\thanks{$^1$Sne\v zana GORDI\' C, {Faculty of Education, University of Novi Sad, Podgori\v cka 4, 25000 Sombor, Serbia, \texttt{snezana.gordic@pef.uns.ac.rs}}\\
	\indent$^2$Tijana LEVAJKOVI\'C, {Applied Statistics, Institute of Stochastics and Business Mathematics, TU Vienna,  \linebreak Austria, \texttt{tijana.levajkovic@tuwien.ac.at}}\\
\indent$^3$Ljubica OPARNICA, Department of Mathematics: Analysis, Logic and Discrete Mathematics, University of Ghent, Krijgslaan 281 (Building S8), B 9000 Ghent, Belgium,  \texttt{oparnica.ljubica@ugent.be} \& Faculty of Education, University of Novi Sad, Podgori\v cka 4, 25000 Sombor, Serbia
}

\begin{abstract}
The  stochastic parabolic equations with  random  potentials,  driving forces and  initial  conditions are considered. The Wick product is used to give sense to the product of two generalized stochastic processes, and  the existence and uniqueness of  solutions  are proved via the chaos expansion method from  white noise analysis. The estimates on coefficients in the chaos expansion form of the solutions are provided.
\end{abstract}

\keywords{ Stochastic parabolic  equations, random potentials, chaos expansions, Wick product}
\subjclass[2000]{35R60, 60H15, 60H40}
\maketitle

\section{Introduction and preliminaries} \label{sec1} 
\label{sec1} 

For given generalized stochastic processes of Kondratiev-type $F$ and $G$, and a bounded in space  generalized stochastic process of Kondratiev-type $Q$,  we consider Cauchy problems for  stochastic parabolic evolution equations
\begin{equation}
\label{general problem}
(\partial_t -\mathcal{L})U + Q \lozenge U =F, \quad U |_{t=0} = G,
\end{equation} 
where $\mathcal{L}$ is an elliptic operator acting on the space variable. Since the  unknown generalized stochastic process $U$ is involved in product with another generalized stochastic proces, potential  $Q$, one must give sense to such product. Here, the Wick product denoted by  $\lozenge$ is used, see \cite{HOUZ1996}. 

The special case, when $\mathcal{L}$ is the Laplacian, is the stochastic heat equation with random potential which, due to its various applications in biology, financial mathematics, aerodynamics, structural acoustics, has been widely studied, e.g. \cite{AHR1997,ANV2000,BDP1997,KL2017}. 
Stochastic evolution equations with multiplicative noise are studied in \cite{LPSZ2015} and stochastic evolution problems with polynomial nonlinearities are studied in \cite{LPSZ2018}. 

In this work we study problem \eqref{general problem}. Using the chaos expansion method from the white noise analysis developed in \cite{HOUZ1996} we prove the theorem 
on existence of unique generalized stochastic process and provide estimates of coefficients in chaos expansion form of the solution.

%

To start with, we recall some basic notions from the white noise analysis, and for more details and proofs we refer to \cite{HOUZ1996}. 
Denote by  $\mathcal{I} :=\N^m_0 $ the set of multi-indices having finite number of  nonzero components,  
the zero vector by $\textbf{0}$,
%
and the length of a multi-index $\alpha = (\alpha_1,\alpha_2, \ldots,\alpha_m,0,0,\ldots)$, $\alpha_i \in \N_0$, $i=1, 2,\ldots, m$, $m\in\N$,  by  $|\alpha|= \sum_{i=1}^{m}\alpha_i$.
If $\alpha=(\alpha_1,\alpha_2,\dots)\in \mathcal I$ and $\beta=(\beta_1,\beta_2,\dots) \in \mathcal I$, then $\alpha \leq \beta$ if and only if $\alpha_k \leq \beta_k$ for all $k \in \mathbb{N}$. 
The following lemma collects results needed later. For proofs and details see \cite{HOUZ1996,LPSZ2018,LPSZ2021sub}.
\begin{lem} \label{pomocna lema - ocene}
	Let $\alpha\in \mathcal{I}$ and $k\in \mathbb{N}_0$.  
	
	(a) Let $k\leq |\alpha|$, and denote by $N(\alpha,k)$ the number of possibilities in which a multi-index $\alpha$ can be written as a sum of $k$ strictly smaller and nonzero multi-indices. Then $N(\alpha,k)\leq 2^{k|\alpha|}$.
	
	%
	(b) Define $(2\N)^\alpha: = \prod_{i=1}^{\infty} (2i)^{\alpha_i}$. Then $|\alpha| \leq (2\mathbb N)^\alpha$ and 	
	\begin{equation}\label{2N konv p>1} 
	\sum_{\alpha\in \mathcal I} (2\N)^{-p\alpha} < \infty\quad  \Leftrightarrow \quad  p>1.
	\end{equation} 
	Moreover, for every  $c>0$ there exists $s\geq 0$ such that $c^{|\alpha|}\leq (2\mathbb N)^{s\alpha}$ and   $c^\alpha\leq (2\mathbb N)^{s\alpha}.$
\end{lem}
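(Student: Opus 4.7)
The plan is to handle the three claims by elementary counting for (a) and by factoring sums over $\mathcal{I}$ into products over coordinates for (b).

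For part (a), I would observe that any decomposition $\alpha=\beta^{(1)}+\cdots+\beta^{(k)}$ with each $\beta^{(j)}$ nonzero and strictly smaller than $\alpha$ forces each $\beta^{(j)}$ to satisfy $\beta^{(j)}\leq \alpha$ componentwise. The number of multi-indices $\beta$ with $\mathbf{0}\leq \beta \leq \alpha$ is $\prod_{i\geq 1}(\alpha_i+1)$, and using the elementary bound $n+1\leq 2^n$ for $n\in\N_0$ one gets
\begin{equation*}
\prod_{i\geq 1}(\alpha_i+1)\leq \prod_{i\geq 1} 2^{\alpha_i}=2^{|\alpha|}.
\end{equation*}
So there are at most $(2^{|\alpha|})^k=2^{k|\alpha|}$ ordered $k$-tuples of admissible summands, which gives the bound on $N(\alpha,k)$.

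For part (b), the inequality $|\alpha|\leq (2\N)^\alpha$ is immediate from $(2\N)^\alpha=\prod_i (2i)^{\alpha_i}\geq \prod_i 2^{\alpha_i}=2^{|\alpha|}\geq |\alpha|$, the last step using $n\leq 2^n$. For the series, since every $\alpha\in\mathcal{I}$ has finite support, nonnegativity (Tonelli/monotone convergence applied coordinate by coordinate) justifies the factorization
\begin{equation*}
\sum_{\alpha\in\mathcal{I}}(2\N)^{-p\alpha}=\prod_{i=1}^{\infty}\sum_{n=0}^{\infty}(2i)^{-pn}=\prod_{i=1}^{\infty}\frac{1}{1-(2i)^{-p}},
\end{equation*}
valid as soon as $(2i)^{-p}<1$ for all $i$. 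The standard criterion for infinite products of terms $(1-a_i)^{-1}$ with $0\leq a_i<1$ reduces convergence to convergence of $\sum_i (2i)^{-p}$, which holds iff $p>1$. For the last assertion, given $c>0$ I would pick $s\geq \max(0,\log_2 c)$; then $c\leq 2^s\leq (2i)^s$ for every $i\geq 1$, so $c^{\alpha_i}\leq (2i)^{s\alpha_i}$ for each coordinate, and multiplying over $i$ yields
\begin{equation*}
c^{|\alpha|}=\prod_i c^{\alpha_i}\leq \prod_i (2i)^{s\alpha_i}=(2\N)^{s\alpha},
\end{equation*}
and the same chain proves $c^\alpha\leq (2\N)^{s\alpha}$ under the analogous product interpretation.

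The only step requiring genuine care is the factorization of $\sum_{\alpha\in\mathcal{I}}(2\N)^{-p\alpha}$ as an infinite product: although $\mathcal{I}$ is built from finitely supported multi-indices, it is itself infinite, so one must invoke monotone convergence (or an exhaustion by truncations $\alpha_i=0$ for $i>n$) to legitimize the interchange of sum and product. Everything else is a routine componentwise estimate combined with $n+1\leq 2^n$ and the $p$-series test.
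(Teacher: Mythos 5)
Your proof is correct. Note that the paper does not actually prove Lemma~\ref{pomocna lema - ocene}; it defers to \cite{HOUZ1996,LPSZ2018,LPSZ2021sub}, and your arguments --- bounding the summands in (a) by the $\prod_{i}(\alpha_i+1)\le 2^{|\alpha|}$ ordered choices per factor, and in (b) factoring $\sum_{\alpha\in\mathcal I}(2\N)^{-p\alpha}$ into the product of geometric series $\prod_{i}(1-(2i)^{-p})^{-1}$ and invoking the $p$-series test --- are precisely the standard ones given in those references, so there is nothing to flag beyond the care you already took with the sum--product interchange.
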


For $\gamma\in \mathcal{I}$, the $\gamma$th {Fourier-Hermite polynomial} is defined by
$H_{\gamma}(\omega):=\prod_{k=1}^\infty h_{\gamma_k}(\langle\omega,
\xi_k\rangle)$, where $\xi_k$, $k\in \mathbb{N}$, is the Hermite function of order $k$,  and $h_k$, $k \in \mathbb N_0$, is the Hermite polynomial. 
For a normed space $X$,  the tensor product 
$X\otimes (S)_{-p}$ is the space of $X$-valued generalized stochastic processes of Kondratiev-type, and the space $X\otimes (S)_{-1}$ is the inductive limit of  spaces $X\otimes (S)_{-p }$, $p\geq 0$. 
Every $X$-valued generalized stochastic process of Kondratiev-type, $F \in X\otimes (S)_{-1}$, can be  represented in {\em the chaos expansion form} $F(x,\omega) = \sum_{ \gamma\in \mathcal I}f_\gamma(x) H_\gamma(\omega)$,  $f_\gamma\in X$,  with  
$\|F\|^2_{X\otimes (S)_{-p }}: = \sum_{ \gamma\in \mathcal{I}}\|f_\gamma\|^2_X (2\N)^{-p\gamma}$  finite for some $p\geq 0$. If it is finite for $p_0$, then it is finite for all $p\geq p_0$. The minimal such $p_0$ we call {\em the critical exponent}.
For $X$ a Banach space, $X\otimes(S)_{-p}$ is a Banach space for every $p\geq p_0$, and $X\otimes (S)_{-1}$ is a Frech\'et space. 
%

The Wick product is introduced to overcome the multiplication problem for random variables in \cite{HOUZ1996} and it is generalized to the set of generalized stochastic processes in \cite{LPSZ2015}. Recall, if $F,G \in X \otimes (S)_{-p}$, $p\geq 0$, are generalized stochastic processes given in chaos expansion forms
$F= \sum_{\alpha\in \mathcal{I} }f_\alpha  H_\gamma$ and $G= \sum_{\beta\in \mathcal{I}} g_\beta  H_\beta$, then the Wick product $F \lozenge G,$ is  defined by
$F \lozenge G =\sum_{\gamma \in \mathcal I} \left( \sum_{\alpha + \beta = \gamma} f_\alpha g_\beta \right)  H_\gamma$.

We conclude the introductory section stating a theorem on the deterministic parabolic evolution problems, and a technical Lemma, both necessary for the later analysis. The proof of the theorem is similar to the proof of  Theorem 3 in \cite{GLO2021}, while  proof of the lemma is straightforward, thus both are omitted.

\begin{thm}\label{theorem estimates L infinity deterministic}
	Let the unbounded and closed operator $\mathcal L$ with a dense domain $D\subseteq L^2(\mathbb R^d)$,  be an infinitesimal generator of a $C_0$-semigroup $(T_t)_{t\geq 0}$ on $L^2(\R^d)$. Let the force term $f\in AC([0,T];L^2(\R^d))$, i.e., being differentiable a.e. on $[0,T]$ with $f' \in L^1(0,T;L^2(\R^d))$. Let  the initial condition $g\in D$,  and the potential $q \in L^{\infty} (\mathbb{R}^d)$. Then, the deterministic parabolic  initial value problem 
	\[
	\left( \frac{\partial}{\partial t}  -  {\mathcal L} \right)  u(t, x) + {q (x)} \cdot  u(t, x)  = f(t, x ), 
	\quad u(0, x) = g(x),
	\]
	has a unique bounded nonnegative solution $u\in AC([0,T];L^2(\R^d))$
	which satisfies  
	\begin{equation*} 
	\label{ee-perturbations}
	\|u(t,\cdot)\|_{L^2} \leq  M(t)  \left( \|g(\cdot)\|_{L^{2}} +\int_0^t \|f(s,\cdot)\|_{L^2} \,ds\right), \quad t\in (0,T],
	\end{equation*}
	where 
	$M(t) : = M \exp {\left( \left(w + M \|q\|_{L^\infty}\right)t \right)},$ and $w \in \mathbb{R}$ and $M>0$ are the stability constants from the semigroup estimate $\|T_t\|_{L(L^2(\R^d))}\leq Me^{wt}$,  $t\geq 0.$
\end{thm}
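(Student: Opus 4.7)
The strategy is to rewrite the equation in semigroup form and treat the multiplicative potential term as a bounded perturbation of $\mathcal L$. Denote by $M_q$ the multiplication operator $u\mapsto q\cdot u$ on $L^2(\R^d)$. Since $q\in L^\infty(\R^d)$, $M_q$ is a bounded linear operator with $\|M_q\|_{\mathcal L(L^2)}=\|q\|_{L^\infty}$. The equation then reads $\partial_t u = (\mathcal L-M_q)u + f$, $u(0)=g$.

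The first step is to invoke the bounded perturbation theorem for $C_0$-semigroups: since $\mathcal L$ generates $(T_t)_{t\ge 0}$ with $\|T_t\|\le Me^{wt}$ and $-M_q$ is a bounded operator, the operator $\mathcal L-M_q$ with domain $D$ is also an infinitesimal generator of a $C_0$-semigroup $(S_t)_{t\ge 0}$ on $L^2(\R^d)$, and this semigroup satisfies the explicit stability estimate
\[
\|S_t\|_{\mathcal L(L^2)} \le M\exp\bigl((w+M\|q\|_{L^\infty})t\bigr) = M(t),\qquad t\ge 0.
\]
Because $g\in D$ (the common domain of $\mathcal L$ and $\mathcal L-M_q$) and $f\in AC([0,T];L^2(\R^d))$ with $f'\in L^1(0,T;L^2(\R^d))$, the abstract Cauchy problem has a unique classical solution given by Duhamel's formula
\[
u(t) = S_t g + \int_0^t S_{t-s} f(s)\,ds,
\]
and standard semigroup theory (applied with the regularity of $f$) yields $u\in AC([0,T];L^2(\R^d))$, which in turn gives uniqueness.

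The estimate is then obtained by taking $L^2$ norms and applying the triangle inequality together with the semigroup bound:
\[
\|u(t)\|_{L^2}\le \|S_t g\|_{L^2} + \int_0^t\|S_{t-s}f(s)\|_{L^2}\,ds \le M(t)\|g\|_{L^2} + \int_0^t M(t-s)\|f(s)\|_{L^2}\,ds.
\]
Since $M(t-s)\le M(t)$ for $0\le s\le t$ (the exponent $w+M\|q\|_{L^\infty}$ can be assumed nonnegative without loss, up to enlarging $w$ in the semigroup estimate), pulling $M(t)$ out of the integral produces exactly the asserted bound. Nonnegativity, when it is meaningful, follows from the positivity-preserving property of $(S_t)$ inherited from $(T_t)$ via the Trotter product formula applied to nonnegative data $g,f$.

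The main technical obstacle is ensuring the abstract perturbation yields both (i) the explicit constant $M(t)$ with the correct dependence on $\|q\|_{L^\infty}$ and (ii) the $AC$-regularity of $u$ from that of $f$; the former is handled by iterating the Dyson–Phillips series representation of $S_t$ and using the Gronwall-type bound $\sum_{k\ge 0}\tfrac{(M\|q\|_{L^\infty} t)^k}{k!}=e^{M\|q\|_{L^\infty}t}$, and the latter by differentiating the Duhamel formula using $f'\in L^1(0,T;L^2)$ and $g\in D$ to guarantee $\partial_t u\in L^1(0,T;L^2)$.
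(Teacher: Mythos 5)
Your proof is correct and follows essentially the route the paper intends: the paper omits the proof of this theorem (deferring to Theorem~3 of \cite{GLO2021}), but its own use of the perturbed semigroup $(S_t)_{t\ge 0}$ generated by $\mathcal L - q\,\mathrm{Id}$ together with the Duhamel formula inside the proof of its main theorem shows that the bounded-perturbation argument with $\|S_t\|\le M(t)$, variation of constants, and monotonicity of $M(\cdot)$ is exactly the intended one. The only caveat is the asserted nonnegativity of $u$, which cannot follow from the stated hypotheses alone; your conditional remark via positivity preservation correctly flags what is really an imprecision in the theorem statement rather than a gap in your argument.
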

\begin{lem} \label{Lema M(t)} Let $M(t)$ be as in Theorem \ref{theorem estimates L infinity deterministic} with the potential $q_{\mathbf{0}} \in L^{\infty} (\mathbb{R}^d)$. Then, the following estimates hold
	\begin{itemize}
		\item[(a)] 	$\tilde{M}(t) := \displaystyle \int_0^t M(s) \,ds  = \dfrac{M(t)-M}{w+M\|q_{\mathbf{0}} \|_{L^\infty}},$ \quad $\displaystyle \int_0^t s M(s) \,ds \leq t  \tilde{M}(t),$ 
		\item[(b)] $\displaystyle \int_0^t  M(s)  \tilde{M}(s)^n  \,ds \leq	 \  \tilde{M}(t)^{n+1},$ \quad  $\displaystyle \int_0^t s M(s) \tilde{M}(s)^n\,ds \leq  t \tilde{M}(t)^{n+1}$ for all $n \in \N.$
	\end{itemize}
\end{lem}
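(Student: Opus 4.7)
The plan is to treat both parts by direct integration, using the exponential form of $M(t)$ and the fact that $\tilde M'(s) = M(s)$. To streamline notation I would set $c := w + M\|q_{\mathbf 0}\|_{L^\infty}$, so $M(s) = M\,e^{cs}$; the constant $c$ may be zero, positive, or negative, but the identity and inequalities below hold in every case (with the degenerate case $c=0$ handled separately by $\tilde M(t) = Mt$).

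For part (a), the first identity is simply the fundamental theorem of calculus applied to $M(s)=Me^{cs}$: when $c\neq 0$ one gets $\tilde M(t)=\int_0^t Me^{cs}\,ds = \frac{M(e^{ct}-1)}{c}=\frac{M(t)-M}{c}$, and when $c=0$ both sides equal $Mt$ (viewed as the limit). The second estimate in (a) is immediate from the monotone bound $s\le t$ on the interval of integration:
\[
\int_0^t s\,M(s)\,ds \;\le\; t\int_0^t M(s)\,ds \;=\; t\,\tilde M(t),
\]
and $M(s)\geq 0$ is what legitimizes pulling out the $t$.

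For part (b), the key observation is that $\tilde M$ is a primitive of $M$, so $M(s)\tilde M(s)^n = \frac{1}{n+1}\frac{d}{ds}\tilde M(s)^{n+1}$. Integrating from $0$ to $t$ and using $\tilde M(0)=0$ gives
\[
\int_0^t M(s)\,\tilde M(s)^n\,ds \;=\; \frac{\tilde M(t)^{n+1}}{n+1} \;\le\; \tilde M(t)^{n+1}
\]
for every $n\in\mathbb N$. The second inequality in (b) then follows by the same $s\le t$ trick as in (a), combined with the first:
\[
\int_0^t s\,M(s)\,\tilde M(s)^n\,ds \;\le\; t\int_0^t M(s)\,\tilde M(s)^n\,ds \;\le\; t\,\tilde M(t)^{n+1}.
\]

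No step presents a genuine obstacle; the only mild point to be careful about is the sign of $c$, which is why I would phrase the computation via $\tilde M$ as an antiderivative rather than via the explicit exponential formula. Everything else is a one-line monotonicity argument, consistent with the authors' remark that the proof is straightforward.
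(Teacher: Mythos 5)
Your proposal is correct: the identity in (a) is the fundamental theorem of calculus for $M(s)=Me^{cs}$ with $c=w+M\|q_{\mathbf{0}}\|_{L^\infty}$, the factor-of-$t$ bounds follow from $s\le t$ and $M\ge 0$, and the key step in (b) is the antiderivative identity $M(s)\tilde M(s)^n=\tfrac{1}{n+1}\tfrac{d}{ds}\tilde M(s)^{n+1}$ together with $\tilde M(0)=0$ and $n+1\ge 1$. The paper explicitly omits the proof as straightforward, and your computation is precisely the routine verification the authors intend (your remark on the degenerate case $c=0$, where the displayed quotient is formally $0/0$, is a sensible extra precaution).
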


\section{Stochastic parabolic equations with random and space depending bounded potential} 
\label{sec3}
Now we turn our attention to stochastic initial value problem  \eqref{general problem}, more precisely we consider
\begin{align}\label{Eq:ish2}
\left( \frac{\partial}{\partial t}  -  {\mathcal L} \right)  U(t, x,\omega) + {Q (x,\omega)} \lozenge  U(t, x,\omega)&  = F(t, x ,\omega), \quad t\in (0,T],\, x\in \mathbb R^d,\, \omega \in \Omega\\
U(0, x,\omega) &= G(x,\omega)\quad x\in \mathbb R^d,\, \omega \in \Omega .\nonumber
\end{align} 

The main result  follows.
\begin{thm} \label{Thm0: stochastic weak sol 1} 
	Let $\mathcal L$ be an unbounded closed operator with dense domain $D\subseteq L^2(\mathbb R^d)$, acting on the space component and  generating  a $C_0$-semigroup on $L^2(\R^d)$ satisfying $\|T_t\|_{L(L^2(\R^d))}\leq Me^{wt}$, and let $\tilde{M}(T)$ be as in Lemma \ref{Lema M(t)}. 
	Let the potential $Q \in L^\infty(\mathbb R^d) \otimes (S)_{-1}$ be a generalized stochastic process of Kondratiev-type with the critical exponent $p_1$ and the chaos expansion
	$Q(x, \omega) = \sum\limits_{\gamma\in \mathcal I} q_\gamma( x) \, H_\gamma(\omega)$, with $q_\mathbf{0}\in L^\infty(\mathbb R^d)$ such that $\tilde{M}(T) \|q_\mathbf{0}\|_{L^\infty} \not= 1$, and with $q_\gamma \in L^\infty(\mathbb R^d)$ such that  $\|q_{\gamma}\|_{L^\infty} \leq \|q_\mathbf{0}\|_{L^\infty}$,\\ for all $\gamma \in \mathcal{I}$.
	Further, assume that  the force term $F\in AC([0,T];L^2(\R^d)) \otimes (S)_{-1}$ and  the initial condition $G\in D \otimes (S)_{-1}$ are  generalized stochastic processes of Kondratiev-type, with critical exponents $p_2$ and $p_3$, respectively. 
	Then,  there exists a unique  generalized stochastic process  \linebreak $U \in AC([0,T];D)  \otimes (S)_{-1} \subseteq AC([0,T];L^2 (\R^d)) \otimes (S)_{-1} $ satisfying the stochastic evolution initial value problem \eqref{Eq:ish2}. 
	Moreover,  for all $t\in [0,T]$ the coefficients $u_\gamma$, $\gamma \in \mathcal{I}$ satisfy
	\begin{equation}
	\label{ocena koeficijenti}
	\norm{u_\gamma (t,\cdot)}{L^2}  \leq  M(t)\Bigg\{ a_{\gamma}(t)   + \sum\limits_{k=1}^{|\gamma|} \tilde{M}(t)^k \bigg( \sum_{\substack{0 \leq |\beta| \leq |\gamma|-k\\ \beta<\gamma}} a_\beta (t) \, \bigg(\sum_{\substack{\theta_1+\dots+\theta_k=\gamma-\beta\\\theta_i \not=\mathbf{0}, \, i=1,\dots, k}} \prod_{i=1}^{k} \|q_{\theta_i}\|_{L^\infty} \bigg)\bigg) \Bigg\},
	\end{equation}
	where $a_\gamma (t) :=  \|g_{\gamma}\|_{L^2} + t \| f_{\gamma}\|$, $\gamma\in\mathcal I$.
\end{thm}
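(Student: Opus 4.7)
The plan is to apply the chaos expansion method from white noise analysis. Substituting $U = \sum_\gamma u_\gamma H_\gamma$, $F = \sum_\gamma f_\gamma H_\gamma$, and $G = \sum_\gamma g_\gamma H_\gamma$ into \eqref{Eq:ish2} and using the Wick-product identity $Q \lozenge U = \sum_\gamma \bigl(\sum_{\alpha+\beta=\gamma} q_\alpha u_\beta\bigr) H_\gamma$, comparison of the coefficients of $H_\gamma$ turns the stochastic equation into the deterministic Cauchy problem
\begin{equation*}
(\partial_t - \mathcal{L})\,u_\gamma + q_{\mathbf 0}\,u_\gamma \;=\; f_\gamma - \sum_{\substack{\alpha+\beta=\gamma \\ \alpha \neq \mathbf 0}} q_\alpha\,u_\beta, \qquad u_\gamma(0,\cdot)=g_\gamma, \qquad \gamma \in \mathcal{I}.
\end{equation*}
Because the inhomogeneity depends only on $u_\beta$ with $\beta < \gamma$, this forms a triangular system that I would solve recursively with respect to the partial order on $\mathcal{I}$.

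For each such $\gamma$ I would apply Theorem \ref{theorem estimates L infinity deterministic} with potential $q_{\mathbf 0}$, initial datum $g_\gamma$, and effective forcing $f_\gamma - \sum_{\alpha\neq\mathbf 0,\,\alpha+\beta=\gamma} q_\alpha u_\beta$, which is absolutely continuous in $t$ with values in $L^2(\mathbb{R}^d)$ by the inductive construction of previous coefficients. This yields existence and uniqueness of $u_\gamma \in AC([0,T];D)$ together with
\begin{equation*}
\|u_\gamma(t,\cdot)\|_{L^2} \;\leq\; M(t)\,a_\gamma(t) \;+\; M(t)\int_0^t \sum_{\substack{\alpha+\beta=\gamma \\ \alpha\neq\mathbf 0}} \|q_\alpha\|_{L^\infty}\,\|u_\beta(s,\cdot)\|_{L^2}\,ds.
\end{equation*}
For $\gamma = \mathbf 0$ the sum is empty, giving $\|u_{\mathbf 0}(t,\cdot)\|_{L^2}\leq M(t)\,a_{\mathbf 0}(t)$, which matches \eqref{ocena koeficijenti} at $|\gamma|=0$.

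The estimate \eqref{ocena koeficijenti} is then proved by induction on $|\gamma|$. Plugging the inductive bound for $\|u_\beta(s,\cdot)\|_{L^2}$ into the integral above, the extra factor $\|q_\alpha\|_{L^\infty}$ (with $\alpha\neq\mathbf 0$ and $\alpha+\beta=\gamma$) merges with the existing $k$-fold product $\prod_{i=1}^k \|q_{\theta_i}\|_{L^\infty}$ from the hypothesis to form a $(k+1)$-fold product over $\theta_1,\ldots,\theta_{k+1}$ summing to $\gamma-\beta'$ with $\beta'<\gamma$ and $|\beta'|\leq |\gamma|-(k+1)$. After reindexing $k+1\mapsto k$ the range $k=1,\ldots,|\gamma|$ in \eqref{ocena koeficijenti} is recovered. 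The time integrals $\int_0^t M(s)\tilde{M}(s)^{k-1}\,ds$ and $\int_0^t s\,M(s)\tilde{M}(s)^{k-1}\,ds$ (the latter coming from the time-weighted summand of $a_\gamma(s)$) are controlled by $\tilde{M}(t)^k$ and $t\,\tilde{M}(t)^k$ via Lemma \ref{Lema M(t)}(b), producing the prescribed prefactor $M(t)\tilde{M}(t)^k$.

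Finally, to place $U$ in $AC([0,T];D) \otimes (S)_{-1}$ I would verify $\sum_\gamma \|u_\gamma(t,\cdot)\|_{L^2}^2\,(2\mathbb{N})^{-p\gamma} < \infty$ for $p$ sufficiently large. The hypothesis $\|q_{\theta_i}\|_{L^\infty} \leq \|q_{\mathbf 0}\|_{L^\infty}$ bounds each $k$-fold product by $\|q_{\mathbf 0}\|_{L^\infty}^k$, while Lemma \ref{pomocna lema - ocene}(a) bounds the number of $k$-fold decompositions of $\gamma-\beta$ by $2^{k|\gamma|}$. Constants of the form $c^{|\gamma|}$ or $c^\gamma$ are then absorbed into $(2\mathbb{N})^{s\gamma}$ by Lemma \ref{pomocna lema - ocene}(b), while the critical exponents $p_1,p_2,p_3$ control the $(2\mathbb{N})$-weighted sums of $\|q_\gamma\|_{L^\infty}$, $\|f_\gamma\|$ and $\|g_\gamma\|_{L^2}$; the hypothesis $\tilde{M}(T)\|q_{\mathbf 0}\|_{L^\infty}\neq 1$ ensures that the residual geometric-type sum in $k$ remains bounded (the cases $<1$ and $>1$ being treated separately, the latter via the absorption mechanism above). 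The main obstacle will be the combinatorial bookkeeping in the induction step: one must verify that the triple sum produced by substituting the inductive hypothesis collapses exactly to the prescribed double-sum form of \eqref{ocena koeficijenti}, and that the boundary cases $k=|\gamma|$ (forcing $\beta=\mathbf 0$) and $k=1$ (arising from the $a_\gamma$ contribution of the base step) are produced correctly by the recursion.
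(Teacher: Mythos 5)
Your proposal follows essentially the same route as the paper's proof: chaos expansion reduces \eqref{Eq:ish2} to a triangular deterministic system solved recursively via Theorem \ref{theorem estimates L infinity deterministic} with potential $q_{\mathbf 0}$ and modified forcing, the coefficient bound \eqref{ocena koeficijenti} is obtained by induction on $|\gamma|$ using Lemma \ref{Lema M(t)}, and convergence of $\sum_\gamma \|u_\gamma\|^2 (2\mathbb{N})^{-p\gamma}$ is established via Lemma \ref{pomocna lema - ocene} and the hypothesis $\tilde{M}(T)\|q_{\mathbf 0}\|_{L^\infty}\neq 1$. The plan is correct in outline and the obstacles you flag (the collapse of the triple sum in the induction step and the geometric-type sum in $k$) are exactly the points the paper works through in detail.
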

\begin{proof}
Representing stochastic processes $Q$, $F$ and $G$ appearing in the problem \eqref{Eq:ish2} in their chaos expansion forms, assuming the solution $U$ in the form
$U(t, x, \omega) = \sum\limits_{\gamma\in \mathcal I} u_\gamma(t, x) H_\gamma(\omega)$, 
and using the definition of the Wick product, we  formally obtain
\[\begin{split}
\sum_{\gamma\in \mathcal I} \left(\frac{\partial}{\partial t}  - \mathcal L \right) \, u_\gamma(t, x) H_\gamma(\omega) + \sum_{\gamma\in \mathcal I} \sum_{\alpha+ \beta=\gamma} q_\alpha(x) \, u_\beta(t, x) \, H_\gamma(\omega) &= \sum_{\gamma\in \mathcal I} f_\gamma(t, x)  \, H_\gamma(\omega),\\
\sum_{\gamma\in \mathcal I} u_\gamma(0,x) H_\gamma(\omega) &= \sum_{\gamma\in \mathcal I} g_\gamma(x) H_\gamma(\omega).
\end{split}\]
From the uniqueness of the chaos expansion representations, the problem is reduced to a triangular system of deterministic equations which can be solved recursively with respect to the length of $\gamma\in \mathcal I$. 
For $|\gamma|=0$: 
\begin{equation}
\label{deterministic system q(x): |gamma|=0}
\left(\frac{\partial}{\partial t}  - \mathcal L  \right) \, u_{\mathbf{0}}(t, x) + \, q_{\mathbf{0}} (x) \,  u_{\mathbf{0}} (t, x) =  f_{\mathbf{0}} (t, x), \quad \quad
u_{\mathbf{0}} (0, x) = g_{\mathbf{0}} (x). 
\end{equation}
By assumptions, 
$q_{\mathbf{0}} \in L^{\infty}(\R^d)$, $f_{\mathbf{0}}\in AC([0,T]; L^2(\R^d))$, and $g_{\mathbf{0}}\in D$. Theorem \ref{theorem estimates L infinity deterministic} implies a unique solution $u_{\mathbf{0}} \in AC([0,T],D)$ to  \eqref{deterministic system q(x): |gamma|=0} given by
$u_0(t,x) = S_t g_{\mathbf{0}}(x) + \int_0^t S_{t-s} f_{\mathbf{0}} (s,x) ds$, $t\in [0,T]$, $x\in \R^d$, where $(S_t)_{t \geq 0}$ is a $C_0$-semigroup on $L^2(\mathbb R^d)$ generated by $\L - q_{\mathbf{0}}\text{Id}$, 
satisfying 
\begin{equation} \label{solution u_0 L2 estimate}
\|u_{\mathbf{0}}(t,\cdot)\|_{L^2} \leq M(t) \left( \|g_{\mathbf{0}} \|_{L^2 } + \int_0^t \| f_{\mathbf{0}} (s,\cdot)\|_{L^2} \,ds \right) \leq M(t) \left( \|g_{\mathbf{0}} \|_{L^2 } + t \| f_{\mathbf{0}}\|  \right)=M(t)a_{\mathbf{0}}(t).
\end{equation}

For $|\gamma|>1$ we have
$$	\left(\frac{\partial}{\partial t}  - \mathcal L  \right) \, u_{\gamma}(t, x) + \, q_{\mathbf{0}} (x) \,  u_{\gamma} (t, x) \, = 	\tilde{f}_{\gamma} (t, x),    \quad
u_{\gamma} (0, x) = g_{\gamma} (x)  ,$$
where $\tilde{f}_{\gamma} (t, x)  = f_{\gamma} (t, x)-\sum_{\substack{\alpha+\beta=\gamma\\ \alpha \not= \mathbf{0}}} q_\alpha(x) \, u_\beta (t, x)=f_{\gamma} (t, x)-\sum_{\mathbf{0}\leq \beta <\gamma} q_{\beta-\gamma}(x) \, u_\beta (t, x)$, with $u_{\mathbf{0}}$ and $ u_\beta$, $\beta<\gamma$ being the solutions  obtained in the previous steps. 
Thus, we obtain a deterministic  problem of the same form as  for $|\gamma|=0$ satisfying, by assumptions, the conditions of  Theorem \ref{theorem estimates L infinity deterministic}  yielding a unique solution $u_{\gamma} \in AC([0,T],D)$ given by  $ u_{\gamma}(t,x) =  S_t g_{\gamma} (x) + \int_0^t S_{t-s} \tilde{f}_{\gamma} (t, x) ds$ and satisfying 
\begin{equation}\label{estFromThm}
\norm{u_\gamma (t,\cdot)}{L^2}  \leq  
M(t)\left( a_\gamma (t) +  \sum\limits_{\mathbf{0} \leq \beta < \gamma} \linf{q_{\gamma-\beta}} \int_0^t \ltwo{ u_{\beta} (s, \cdot)}\,ds\right).
\end{equation}

Next, by induction,  we will prove that $u_\gamma,$ $\gamma \in \mathcal{I}$, satisfy the estimate \eqref{ocena koeficijenti}. The estimate for $|\gamma|=0$ boils down to \eqref{solution u_0 L2 estimate}. We assume that the estimate \eqref{ocena koeficijenti} holds for every $\beta \in \mathcal{I}$ with $|\beta|\leq n$, i.e.,
\begin{equation} \label{opsta formula}
\norm{u_\beta (t,\cdot)}{L^2}  \leq  M(t)\Bigg\{ a_{\beta}(t)   + \sum\limits_{l=1}^{|\beta|} \tilde{M}(t)^l \bigg( \sum_{\substack{0 \leq |\alpha| \leq |\beta|-l\\ \alpha<\beta}} a_\alpha (t) \sum_{\substack{\theta_1+\dots+\theta_l=\beta-\alpha\\\theta_i \not=\mathbf{0}, \, i=1,\dots, l}} \prod_{i=1}^{l} \|q_{\theta_i}\|_{L^\infty} \bigg) \Bigg\},
\end{equation}
and want to show that \eqref{ocena koeficijenti} holds for $\gamma \in \mathcal{I}$ with $|\gamma|=n+1.$ 
Integrating  \eqref{opsta formula} and using Lemma \ref{Lema M(t)} we obtain
$$
\int_0^t \ltwo{ u_{\beta} (s, \cdot)}\,ds \leq a_\beta (t) \tilde{M}(t) + \sum\limits_{l=1}^{|\beta|} \tilde{M}(t)^{l+1} \Bigg( \sum_{\substack{0 \leq |\alpha| \leq |\beta|-l \\ \alpha<\beta}} a_\alpha (t) \sum_{\substack{\theta_1+\dots+\theta_l=\beta-\alpha \\ \theta_i \not=\mathbf{0}}} \prod_{i=1}^{l} \|q_{\theta_i}\|_{L^\infty} \Bigg).
$$ 
Starting from  \eqref{estFromThm} we obtain that $\norm{u_\gamma (t,\cdot)}{L^2}$ is bounded from above by $M(t)$ multiplied with
\begin{equation*}
a_\gamma (t)    
+  \sum\limits_{\mathbf{0} \leq \beta < \gamma} \linf{q_{\gamma-\beta}} a_\beta (t) \tilde{M}(t) 
+  \sum\limits_{\mathbf{0} < \beta < \gamma} \linf{q_{\gamma-\beta}} \sum\limits_{l=1}^{|\beta|} \tilde{M}(t)^{l+1} \bigg( \sum_{\substack{0 \leq |\alpha| \leq |\beta|-l \\ \alpha<\beta}} a_\alpha (t) \sum_{\substack{\theta_1+\dots+\theta_l=\beta-\alpha \\ \theta_i \not=\mathbf{0}}} \prod_{i=1}^{l} \|q_{\theta_i}\|_{L^\infty} \bigg).
\end{equation*}
We want to sum terms with respect to powers of $\tilde{M}(t)$. The first step is to change the order of the first two sums in the third term, where, while $\mathbf{0} < \beta<\gamma$, the length  $|\beta|$  varies from $1$ to $n$, thus 
the third term becomes 
\begin{eqnarray*}
	\sum_{l=1}^n \tilde{M}(t)^{l+1} \sum\limits_{\mathbf{0} < \beta < \gamma} \Bigg( \sum_{\substack{0 \leq |\alpha| \leq |\beta|-l \\ \alpha<\beta}} a_\alpha (t) \sum_{\substack{\theta_1+\dots+\theta_l=\beta-\alpha \\ \theta_i \not=\mathbf{0}}} \prod_{i=1}^{l} \|q_{\theta_i}\|_{L^\infty} \|q_{\gamma-\beta}\|_{L^\infty}\Bigg).
\end{eqnarray*}
Next we merge the sums over $\mathbf{0} < \beta < \gamma$ (which implies $|\beta| \leq |\gamma|-1$) and $\mathbf{0} \leq  \alpha<\beta$ with $ |\alpha| \leq |\beta|-l $ into the sum over $\mathbf{0} \leq \alpha <  \gamma$ with $ |\alpha| \leq |\gamma|-(l+1)$. Denoting $q_{\theta_{l+1}}:= q_{\gamma-\beta}$ we obtain
\begin{eqnarray*}
	&&\sum_{l=1}^n \tilde{M}(t)^{l+1} \sum_{\substack{\mathbf{0} \leq \alpha <\gamma \\ 0 \leq |\alpha| \leq |\gamma|-(l+1) }} a_\alpha (t) \sum_{\substack{\theta_1+\dots+\theta_{l+1}=\gamma-\alpha \\ \theta_i \not=\mathbf{0}}} \prod_{i=1}^{l+1} \|q_{\theta_i}\|_{L^\infty} \\
	&&= \sum_{k=2}^{|\gamma|} \tilde{M}(t)^{k} \sum_{\substack{  0 \leq |\alpha| \leq |\gamma|-k \\\alpha <\gamma }} a_\alpha (t) \sum_{\substack{\theta_1+\dots+\theta_{k}=\gamma-\alpha \\ \theta_i \not=\mathbf{0}}} \prod_{i=1}^{k} \|q_{\theta_i}\|_{L^\infty}.
\end{eqnarray*}
Therefore, $\norm{u_\gamma (t,\cdot)}{L^2} $ is bounded by
\begin{equation*}
M(t)\Bigg\{ a_\gamma (t)    + \sum\limits_{\mathbf{0} \leq \beta < \gamma} \linf{q_{\gamma-\beta}} a_\beta (t) \tilde{M}(t)+ \sum_{k=2}^{|\gamma|} \tilde{M}(t)^{k} \sum_{\substack{  0 \leq |\alpha| \leq |\gamma|-k \\\alpha <\gamma }} a_\alpha (t) \sum_{\substack{\theta_1+\dots+\theta_{k}=\gamma-\alpha \\ \theta_i \not=\mathbf{0}}} \prod_{i=1}^{k} \|q_{\theta_i}\|_{L^\infty} \Bigg\},
\end{equation*}
and thus \eqref{ocena koeficijenti} holds.
%
Next we show that the solution $U$ is a generalized stochastic process of Kondratiev-type, i.e., that the sum $\vertiii{U}^2  := \sum_{ \gamma\in \mathcal{I}}\|u_\gamma\|^2_{AC([0,T]; L^2(\R^d))} (2\N)^{-p\gamma}$  is finite for some critical exponent $p$ to be determined bellow. 
Using  \eqref{ocena koeficijenti} and $(a_1+a_2+a_3)^2 \leq 3a_1^2 + 3a_2^2+3a_3^2$ we find that $\vertiii{U}^2 $ is bounded by
\begin{eqnarray*}
	&&3 M(T)^2\sum_{\gamma \in \mathcal{I}}\| g_{\gamma}\|_{L^2}^2 (2 \N)^{-p \gamma}  + 3T^2M(T)^2 \sum_{\gamma \in \mathcal{I}} \| f_{\gamma}\|^2 (2 \N)^{-p \gamma}\\
	& & +  \, 3 M(T)^2 \sum_{\gamma \in \mathcal{I}}\Bigg(\sum\limits_{k=1}^{|\gamma|} \tilde{M}(T)^k \bigg( \sum_{\substack{0 \leq |\beta| \leq |\gamma|-k \\ \beta<\gamma}} a_\beta (T) \bigg( \sum_{\substack{\theta_1+\dots+\theta_k=\gamma-\beta \\ \theta_i \not=\mathbf{0}}} \prod_{i=1}^{k} \|q_{\theta_i}\|_{L^\infty} \bigg)\bigg) \Bigg)^2(2 \N)^{-p \gamma} \\
	&& = : 3 M(T)^2 \left( S_1+ T^2 S_2 +S_3\right).
\end{eqnarray*}	
Chosing $p \geq \max\{p_2,p_3\}$, according to assumptions on $G$ and $F$, we have
\begin{equation} \label{suma S1+T2 S2}
S_1+T^2 S_2  
\leq  \sum_{\gamma \in \mathcal{I}}\| g_{\gamma}\|_{L^2}^2 (2 \N)^{-p_3 \gamma} + T^2 \sum_{\gamma \in \mathcal{I}}\| f_{\gamma}\|^2 (2 \N)^{-p_2 \gamma} 
:=A < \infty.
\end{equation}
For $S_3$, using  $(\sum\limits_{k=1}^{|\gamma|} x_k)^2 \leq |\gamma| \sum\limits_{k=1}^{|\gamma|} x_k^2$ we obtain
\[
S_3 \leq  \sum_{\gamma \in \mathcal{I}}|\gamma|  \sum\limits_{k=1}^{|\gamma|}  \tilde{M}(T)^{2k} \bigg( \sum_{\substack{0 \leq |\beta| \leq |\gamma|-k \\ \beta<\gamma}} a_\beta (T) \bigg( \sum_{\substack{\theta_1+\dots+\theta_k=\gamma-\beta \\ \theta_i \not=\mathbf{0}}} \prod_{i=1}^{k} \|q_{\theta_i}\|_{L^\infty} \bigg)\bigg)^2 (2\N)^{-p\gamma}.
\]
Further, using $\left( \sum\limits_{ \alpha \in \mathcal{I}} |x_{\alpha} y_\alpha| \right)^2 \leq \left( \sum\limits_{\alpha \in \mathcal{I}}|x_{\alpha}|^2\right) \left(  \sum\limits_{ \alpha \in \mathcal{I}} |y_\alpha|^2 \right)$ and rearranging the powers of $(2\N)^{-p\gamma}$ we obtain
\begin{eqnarray*}
	S_3 \leq \sum_{\gamma \in \mathcal{I}} |\gamma|  \sum\limits_{k=1}^{|\gamma|}  \tilde{M}(T)^{2k} \sum_{\substack{0 \leq |\beta| \leq |\gamma|-k \\ \beta<\gamma}} (a_\beta (T) (2\N)^{-\frac{p\gamma}{6}})^2 
	\sum_{\substack{0 \leq |\beta| \leq |\gamma|-k \\ \beta<\gamma}} 
	\bigg( \sum_{\substack{\theta_1+\dots+\theta_k=\gamma-\beta \\ \theta_i \not=\mathbf{0}}} \prod_{i=1}^{k} \|q_{\theta_i}\|_{L^\infty} (2\N)^{-\frac{p\gamma}{6}}\bigg)^2  (2\N)^{-\frac{p\gamma}3}.
\end{eqnarray*}
Since $\beta<\gamma$ and $p \geq \max\{p_2,p_3\}$ (chosen in \eqref{suma S1+T2 S2}) we have
\begin{equation*} 
\sum_{\substack{0 \leq |\beta| \leq |\gamma|-k \\ \beta<\gamma}} a_\beta (T)^2  (2\N)^{-\frac{p\gamma}3} \leq \sum_{\substack{0 \leq |\beta| \leq |\gamma|-k \\ \beta<\gamma}} a_\beta (T)^2  (2\N)^{-\frac{p\beta}3} \leq 2 A <\infty,
\end{equation*}
and by Lemma \ref{pomocna lema - ocene} we find
\[ S_3	 \leq  2A \sum_{\gamma \in \mathcal{I}} |\gamma|  \sum\limits_{k=1}^{|\gamma|}  \tilde{M}(T)^{2k}  \Bigg( \sum_{\substack{0 \leq |\beta| \leq |\gamma|-k \\ \beta<\gamma}} \|q_\textbf{0}\|^{2k}_{L^\infty}  \,  2^{2k|\gamma-\beta|}(2\N)^{-\frac{p(\gamma-\beta)}{3}}\Bigg) (2\N)^{-\frac{p\gamma}3}.
\]
By Lemma \ref{pomocna lema - ocene}, there exists $s\geq 0$ such that $2^{2k|\gamma-\beta|}\leq (2\N)^{\frac{s (\gamma-\beta)}3}$, and choosing $p$ to satisfy $p>s+3$ we have 
\begin{eqnarray*}
	S_3	
	&\leq & 2A \sum_{\gamma \in \mathcal{I}}  |\gamma|   \sum\limits_{k=1}^{|\gamma|}  \tilde{M}(T)^{2k}  \|q_\textbf{0}\|^{2k}_{L^\infty} \Bigg( \sum_{\substack{ |\gamma-\beta| \geq k \\ \beta<\gamma}} (2\N)^{-\frac{(p-s) (\gamma-\beta)}3}\Bigg) (2\N)^{-\frac{p\gamma}{3}}\\
	& \leq & 2 A C \sum_{\gamma \in \mathcal{I}}  |\gamma|   \sum\limits_{k=1}^{|\gamma|}  \tilde{M}(T)^{2k} \|q_\textbf{0}\|^{2k}_{L^\infty}   (2\N)^{-\frac{p\gamma}3},
\end{eqnarray*}
where $A$ is defined in \eqref{suma S1+T2 S2} and $C:=\sum_{\alpha \in \mathcal I} (2\N)^{-\frac{(p-s) \alpha}3}$ is finite by \eqref{2N konv p>1}. The assumption \mbox{$\tilde{M}(T) \|q_\textbf{0}\|_{L^\infty} \not= 1$} allows to
sum up the inner sum leading to
\begin{eqnarray*}	
	S_3 \leq 2 A C \dfrac{\tilde{M}(T)^{2}\|q_\textbf{0}\|^{2}_{L^\infty}}{1-\tilde{M}(T)^{2}\|q_\textbf{0}\|^{2}_{L^\infty}} \left(  \sum_{\gamma \in \mathcal{I}}  |\gamma| (2\N)^{-\frac{p\gamma}3}  - \sum_{\gamma \in \mathcal{I}}  |\gamma| \tilde{M}(T)^{2|\gamma|}\|q_\textbf{0}\|^{2|\gamma|}_{L^\infty} (2\N)^{-\frac{p\gamma}3} \right).
\end{eqnarray*}

By Lemma \ref{pomocna lema - ocene}, 
for $\tilde{M} (T)^2 \|q_\textbf{0}\|^2 >0$ there exists $s_1>0$ so that
\begin{eqnarray} \label{s3 suma 2}
\sum_{\gamma \in \mathcal{I}}  |\gamma| \tilde{M}(T)^{2|\gamma|}\|q_\textbf{0}\|^{2|\gamma|}_{L^\infty} (2\N)^{-\frac{p\gamma}3} 
\leq \sum_{\gamma \in \mathcal{I}} (2\N)^{s_1\gamma} (2\N)^{-(\frac{p}3-1)\gamma}.
\end{eqnarray}

Choosing $p>3s_1+6$, by Lemma \ref{pomocna lema - ocene} we have that  \eqref{s3 suma 2} is finite, and
\begin{equation*}
\sum_{\gamma \in \mathcal{I}}  |\gamma| (2\N)^{-\frac{p\gamma}3} \leq  \sum_{\gamma \in \mathcal{I}}  (2\N)^\gamma (2\N)^{-\frac{p\gamma}3} =  \sum_{\gamma \in \mathcal{I}}   (2\N)^{-(\frac{p}3-1)\gamma} < \infty,
\end{equation*}
yielding that $S_3$ is finite.
Finally, $\vertiii{U}^2 < \infty$  for $p\geq \max\{p_2, p_3, s+3, 3s_1+6\}$, and  \linebreak $U \in AC([0,T]; D)  \otimes (S)_{-1}$. \\

The uniqueness of the solution $U$ follows from the uniqueness of its coefficients $u_\gamma$, $\gamma\in \mathcal I$ and the uniqueness of  the chaos expansion representation in the Fourier--Hermite basis of orthogonal stochastic polynomials.
\end{proof}

%
%
%

\section*{Acknowledgments}

Author Ljubica Oparnica is supported by the FWO Odysseus 1 grant no. G.0H94.18N: Analysis and Partial Differential Equations. Author Sne\v zana Gordi\'c is supported by Ministry of Education, Science and Technological Development of the Republic of Serbia, by Faculty of Education in Sombor.


\begin{thebibliography}{99}
		\bibitem{AHR1997}
		S. Albeverio, Z. Haba, F. Russo, 
		\emph{ A two-space dimensional semilinear heat equation perturbed by (Gaussian) white noise}, 
		Probab. Theory Relat. Fields,
		121(3), 319--366,  2001.
		
		\bibitem{ANV2000}
		E. Al\'os, D. Nualart, F. Viens,
		\emph{Stochastic heat equation with white-noise drift},
		in: 
		Ann. Inst. H. Poincaré Sect. B,  36(2), Elsevier, 181--218, 2001.
		
		
		\bibitem{BDP1997}
		F. E. Benth, T. Deck, J. Potthoff, 
		\emph{A white noise approach to a class of non-linear stochastic heat equations}, 
		J. Funct. Anal., 
		146 (2), 382--415, 1997.
		
		
		
		\bibitem{GLO2021} S. Gordi\'c, T. Levajkovi\'c,  {Lj.} Oparnica, 
		{\em  Stochastic parabolic equations with singular potentials},
		Chaos, Solitons and Fractals, 151, 111245, 2021.
		
		
		
		\bibitem{HOUZ1996} 
		H. Holden, B. \O ksendal, J. Ub\o e, T. Zhang, 
		\emph{Stochastic Partial Differential Equations. A Modeling, White Noise Functional Approach}. 
		Universitext, Springer, 1996.
		
		%
		
		\bibitem{KL2017}
		H.-J. Kim, S. V. Lototsky,
		\emph{Heat equation with geometric rough path potential in one space dimension: existence and regularity of solution}, 
		Commun. Korean Math. Soc. 34(3), 757--769, 2019.
		
		
		\bibitem{LPSZ2015}
		T. Levajkovi{\'c}, S. Pilipovi{\'c}, D. Sele{\v{s}}i, M. {\v{Z}}igi{\'c}, 
		\emph{Stochastic evolution equations with multiplicative noise}, 
		Electron. J. Probab.,  20(19), 1--23, 2015.
		
		\bibitem{LPSZ2018}
		T. Levajkovi{\'c}, S. Pilipovi{\'c}, D. Sele{\v{s}}i, M. {\v{Z}}igi{\'c}, 
		\emph{Stochastic evolution equations with Wick-polynomial nonlinearities},
		Electron. J. Probab.,  23(19), 1--25, 2018.
		
			\bibitem{LPSZ2021sub}
			T. Levajkovi{\'c}, S. Pilipovi{\'c}, D. Sele{\v{s}}i, M. {\v{Z}}igi{\'c}, 
			\emph{Stochastic evolution equations with Wick-analytic nonlinearities}. Preprint
			2021. 
		
		\bibitem{Pazy1983}
		A. Pazy,
		\emph{Semigroups of Linear Operators and Applications to Partial Differential Equations},   Applied Mathematical Sciences, 44, Springer--Verlag, 1983.
	\end{thebibliography}
\end{document}